\providecommand{\U}[1]{\protect\rule{.1in}{.1in}}
\newtheorem{theorem}{Theorem}
\theoremstyle{plain}
\newtheorem{definition}{Definition}
\newtheorem{example}{Example}
\newtheorem{lemma}{Lemma}
\newtheorem{remark}{Remark}
\numberwithin{equation}{section}
\begin{document}
\title[Relative Convexity and Its Applications]{Relative Convexity and Its Applications}
\author{Constantin P. Niculescu}
\address{Department of Mathematics, University of Craiova, Craiova 200585, Romania}
\email{cpniculescu@gmail.com}
\author{Ionel Roven\c{t}a}
\address{Department of Mathematics, University of Craiova, Craiova 200585, Romania}
\email{ionelroventa@yahoo.com}
\thanks{Presented to the \emph{Workshop on Convex Functions and Inequalities},
Targoviste, June 14, 2014}
\date{October 1, 2014}
\subjclass[2000]{26B25, 26D15.}
\keywords{convex function, supporting hyperplane, positive measure, doubly stochastic matrix}

\begin{abstract}
We discuss a rather general condition under which the inequality of Jensen
works for certain convex combinations of points not all in the domain of
convexity of the function under attention. Based on this fact, an extension of
the Hardy-Littlewood-P\'{o}lya theorem of majorization is proved and new
insight is given into the problem of risk aversion in mathematical finance.

\end{abstract}
\maketitle

\section{Introduction}

The important role played by the classical inequality of Jensen in
mathematics, probability theory, economics, statistical physics, information
theory etc. is well known. See the books of Niculescu and Persson
\cite{NP2006}, Pe\v{c}ari\'{c}, Proschan and Tong \cite{PPT} and Simon
\cite{Simon}. The aim of this paper is to discuss a rather general condition
under which the inequality of Jensen works in a framework that includes a
large variety of nonconvex functions and to provide on this basis applications
to majorization theory and mathematical finance.

The possibility to extend the inequality of Jensen outside the framework of
convex functions was first noticed twenty years ago by Dragomirescu and Ivan
\cite{DI1993}. Later, Pearce and Pe\v{c}ari\'{c} \cite{PP1997} and Czinder and
P\'{a}les \cite{CP} have considered the special case of mixed convexity
(assuming the symmetry of the graph with respect to the inflection point). For
related results, see the papers of Florea and Niculescu \cite{FN2007},
Niculescu and Spiridon \cite{NSp2013}, and Mihai and Niculescu \cite{MN2015}.

The inequality of Jensen characterizes the behavior of a continuous convex
function with respect to a mass distribution on its domain. More precisely, if
$f$ is a continuous convex function on a compact convex subset $K$ of
$\mathbb{R}^{N}$ and $\mu$ is a Borel probability measure on $K$ having the
barycenter
\[
b_{\mu}=\int_{K}xd\mu(x),
\]
then the value of $f$ at $b_{\mu}$ does not exceed the mean value of $f$ over
$K,$ that is,%
\[
f(b_{\mu})\leq\int_{K}f(x)d\mu(x).
\]

A moment's reflection reveals that the essence of this inequality is the fact
that $b_{\mu}$ is a point of convexity of $f$ relative to its domain $K$. The
precise meaning of the notion of point of convexity is given in Definition
\ref{def1} below, which is stated in the framework of real-valued continuous
functions $f$ defined on a compact convex subset $K$ of $\mathbb{R}^{N}$.

\begin{definition}
\label{def1}A point $a\in K$ is a point of convexity of the function $f$
relative to the convex subset $V$ of $K$\emph{ }if $a\in V$ and
\begin{equation}
f(a)\leq\sum_{k=1}^{n}\lambda_{k}f(x_{k}), \tag{$J$}\label{J}%
\end{equation}
for every family of points $x_{1},...,x_{n}$ in $V$ and every family of
positive weights $\lambda_{1},...,\lambda_{n}$ with $\sum_{k=1}^{n}\lambda
_{k}=1$ and $\sum_{k=1}^{n}\lambda_{k}x_{k}=a.$\newline

The point $a$ is a point of concavity if it is a point of convexity for $-f$
\emph{(}equivalently, if the inequality $(J)$ works in the reversed
way\emph{)}.
\end{definition}

In what follows, the set $V$ that appears in Definition 1 will be referred to
as a\ \emph{neighborhood of convexity} of $a.$ Here, the term of
\emph{neighborhood} has an extended meaning and is not necessarily ascribed to
the topology of $\mathbb{R}^{N}.$ In order to avoid the trivial case where
$V=\left\{  a\right\}  ,$ we will always assume that $V$ is an infinite set;
for example, this happens when $a$ belongs to the relative interior of $V$
(the interior within the affine hull of $V)$.

For the function $f(x,y)=x^{2}-y^{2}$, $\ $the origin is a point of convexity
relative to the $Ox$ axis, and a point of concavity relative to the $Oy$ axis.
With respect to the plane topology, both axes have empty interior.

If a function\thinspace$f:K\rightarrow\mathbb{R}$ is convex, then every point
of $K$ is a point of convexity relative to the whole domain $K$ (and this fact
characterizes the property of convexity of $f$).

Definition 1 is motivated mainly by the existence of nonconvex functions that
admit points of convexity relative to the whole domain (or at least to a
neighborhood of convexity where the function is not convex). An illustration
is offered by the nonconvex function $g(x)=\left\vert x^{2}-1\right\vert ;$
for it, all points in $(-\infty,-1]\cup\lbrack1,\infty)$ are points of
convexity relative to the entire real set $\mathbb{R}$.

Every point of local minimum of a continuous function $f:[0,1]\rightarrow
\mathbb{R}$ is a point of convexity. Thus, every nowhere differentiable
continuous function $f:[0,1]\rightarrow\mathbb{R}$ admits points of convexity
despite the fact that it is not convex on any nondegenerate interval.

The idea of point of convexity is not entirely new. In an equivalent form, it
is present in the paper of Dragomirescu and Ivan \cite{DI1993}. The technique
of convex minorants, described by Steele \cite{Steele} at pp. 96 - 99, is also
closed to the concept of point of convexity.

A different concept of \emph{punctual convexity} is discussed in the recent
paper of Florea and P\u{a}lt\u{a}nea \cite{FP2014}.

\section{The Existence of Points of Convexity}

The following lemma indicates a simple geometric condition under which a point
is a point of convexity relative to the whole domain.

\begin{lemma}
\label{lem1}Assume that $f$ is a real-valued continuous function defined on a
compact convex subset $K$ of $\mathbb{R}^{N}.$ If $f$ admits a supporting
hyperplane at a point $a,$ then $a$ is point of convexity of $f$ relative to
$K.$

In other words, every point at which the subdifferential is nonempty is a
point of convexity.
\end{lemma}

\begin{proof}
Indeed, the existence of a supporting hyperplane at $a$ is equivalent to the
existence of an affine function $h(x)=\langle x,v\rangle+c$ (for suitable
$v\in\mathbb{R}^{N}$ and $c\in\mathbb{R}$) such that%
\[
f(a)=h(a)\text{ and }f(x)\geq h(x)\text{ for all }x\in K.
\]
If $\mu$ is a Borel probability measure, its barycenter is given by the
formula%
\[
b_{\mu}=\int_{K}xd\mu(x),
\]
so that if $b_{\mu}=a,$ then
\[
f(a)=h(a)=h\left(  \int_{K}xd\mu(x)\right)  =\int_{K}h(x)d\mu(x)\leq\int
_{K}f(x)d\mu(x).
\]

\end{proof}

\begin{remark}
Another sufficient condition for the convexity at a point, formulated in terms
of secant line slopes, can be found in the papers of Niculescu and Stephan
\cite{NSt2012}, \cite{NSt2013}. However, as shows the case of polynomials of
fourth degree, that condition does not overcome the result of Lemma 1.
\end{remark}

As is well known, the usual property of convexity assures the existence of a
supporting hyperplane at each interior point. See \cite{NP2006}, Theorem
3.7.1, p. 128. This explains why Jensen's inequality works nicely in the
context of continuous convex functions.

In the case of differentiable functions, the supporting hyperplane is unique
and coincides with the tangent hyperplane. For such functions, Lemma 1 asserts
that every point where the tangent hyperplane lies above/below the graph is a
point of concavity/convexity.

\begin{example}
In the one real variable case, the existence of points of convexity of a
nonconvex differentiable function (such as $xe^{x},$ $x^{2}e^{-x},$ $\log
^{2}x,$ $\frac{\log x}{x}$ etc.) can be easily proved by looking at the
position of the tangent line with respect to the graph.

For example, the function $xe^{x}$ is concave on $(-\infty,-2]$ and convex on
$[-2,\infty)$ (and attains a global minimum at $x=-1).$ See Figure 1.%
%TCIMACRO{\FRAME{fhFU}{5.8788cm}{3.9246cm}{0pt}{\Qcb{A point of convexity of
%the function $xe^{x}$ relative to the whole real axis.}}{\Qlb{Fig1}}%
%{fig1.eps}{\special{ language "Scientific Word";  type "GRAPHIC";
%maintain-aspect-ratio TRUE;  display "USEDEF";  valid_file "F";
%width 5.8788cm;  height 3.9246cm;  depth 0pt;  original-width 0pt;
%original-height 0pt;  cropleft "0";  croptop "1";  cropright "1";
%cropbottom "0";  filename '';file-properties "XNPEU";}} }%
%BeginExpansion
\begin{figure}
[h]
\begin{center}
\includegraphics[
height=3.9246cm,
width=5.8788cm
]%
{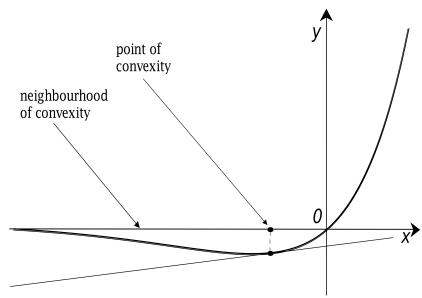}%
\caption{A point of convexity of the function $xe^{x}$ relative to the whole
real axis.}%
\label{Fig1}%
\end{center}
\end{figure}
%EndExpansion

Based on Lemma 1, one can easily show that every point $x\geq-1$ is a point of
convexity of $f$ relative to the whole real axis. Therefore%
\[
\sum_{k=1}^{n}\lambda_{k}x_{k}e^{x_{k}}\geq\left(  \sum_{k=1}^{n}\lambda
_{k}x_{k}\right)  e^{\sum_{k=1}^{n}\lambda_{k}x_{k}},
\]
whenever $\sum_{k=1}^{n}\lambda_{k}x_{k}\geq-1.$

In the special case where $\sum_{k=1}^{n}\lambda_{k}x_{k}\geq0,$ this
inequality can be deduced from Chebyshev's inequality and the convexity of the
exponential function. Borwein and Girgensohn \cite{BG} proved that
\[
\sum_{k=1}^{n}x_{k}e^{x_{k}}\geq\frac{\max\left\{  2,e(1-1/n)\right\}  }%
{n}\sum_{k=1}^{n}x_{k}^{2},
\]
for every family of real numbers $x_{1},x_{2},...,,x_{n}$ such that
$\sum_{k=1}^{n}x_{k}\geq0$. The extension of their result to the weighted case
$($subject to the condition $\sum_{k=1}^{n}\lambda_{k}x_{k}\geq0)$ is an open problem.
\end{example}

\begin{example}
The two real variables function
\[
f(x,y)=e^{-x^{2}-y^{2}},\quad\left(  x,y\right)  \in\mathbb{R}^{2},
\]
exhibits the phenomenon of relative concavity. Indeed, its graph is the
rotation graph of the function $z=e^{-x^{2}}$ around the $Oz$ axis and this
makes possible to apply Lemma 1 by means of calculus of one real variable. See
Figure 2.%
%TCIMACRO{\FRAME{fhFU}{6.1224cm}{4.0863cm}{0pt}{\Qcb{A point of concavity of
%the function $e^{-x^{2}-y^{2}}$ and a neighborhood of concavity of it.}%
%}{\Qlb{Fig2}}{fig2.eps}{\special{ language "Scientific Word";
%type "GRAPHIC";  maintain-aspect-ratio TRUE;  display "USEDEF";
%valid_file "F";  width 6.1224cm;  height 4.0863cm;  depth 0pt;
%original-width 0pt;  original-height 0pt;  cropleft "0";  croptop "1";
%cropright "1";  cropbottom "0";  filename '';file-properties "XNPEU";}%
%} }%
%BeginExpansion
\begin{figure}
[h]
\begin{center}
\includegraphics[
height=4.0863cm,
width=6.1224cm
]%
{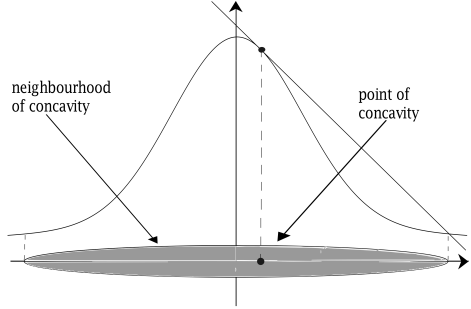}%
\caption{A point of concavity of the function $e^{-x^{2}-y^{2}}$ and a
neighborhood of concavity of it.}%
\label{Fig2}%
\end{center}
\end{figure}
%EndExpansion

The convexity properties of the function $f$ can be described in a more
convenient way by viewed it as a function of complex variable, via the formula
$f(w)=e^{-\left\vert w\right\vert ^{2}}.$ The function $f$ is strictly concave
on the compact disc $\overline{D}_{1/\sqrt{2}}\left(  0\right)  $ and attains
a global maximum at the origin.The tangent plane at the graph of $f,$ at any
point $w_{0}=(x_{0},y_{0})$ with $\left\Vert w_{0}\right\Vert \leq1/2,$ is
above the graph over a neighborhood of $w_{0}$ including the closed disc
$\overline{D}_{r^{\ast}}\left(  0\right)  $, where $r^{\ast}=1.\,\allowbreak
183\,802...$ is the solution of the equation $e^{-1/4}(\frac{3}{2}%
-x)=e^{-x^{2}}$. As a consequence,%
\[
\sum_{k=1}^{n}\lambda_{k}e^{-\left\vert w_{k}\right\vert ^{2}}\leq e^{-M^{2}}%
\]
for all points $w_{1},...,w_{n}\in\overline{D}_{r^{\ast}}\left(  0\right)  $
and all $\lambda_{1},...,\lambda_{n}>0$ such that $\sum_{k=1}^{n}\lambda
_{k}=1$ and $\left\vert \sum\limits_{k=1}^{n}\lambda_{k}w_{k}\right\vert
=M\leq1/2.$ Notice that Jensen's inequality yields this conclusion only when
$w_{1},...,w_{n}\in\overline{D}_{1/\sqrt{2}}\left(  0\right)  .$
\end{example}

The real variable case has also nontrivial implications in the case of matrix
functions. The function $F(X)=\operatorname{trace}(f(X))$ is convex/concave on
the linear space $\operatorname*{Sym}(n,\mathbb{R}),$ of all self-adjoint
(that is, symmetric) matrices in $\operatorname{M}_{n}(\mathbb{R}),$ whenever
$f:\mathbb{R}\rightarrow\mathbb{R}$ is convex/concave. See the paper of Lieb
and Pedersen \cite{LP} for details. Thus, in the case of $f(x)=xe^{x},$ the
function $F$ is concave on the convex set $\operatorname*{Sym}_{sp\subset
(-\infty,-2]}(n,\mathbb{R}),$ of all symmetric matrices in $\operatorname{M}%
_{n}(\mathbb{R})$ whose spectrum is included in $(-\infty,-2];$ the function
$F$ is convex on the set $\operatorname*{Sym}_{sp\subset\lbrack-2,\infty
)}(n,\mathbb{R}),$ of all symmetric matrices in $\operatorname{M}%
_{n}(\mathbb{R})$ whose spectrum is included in $(-2,\infty].$

The following result is a direct consequence of functional calculus with
self-adjoint matrices. If $\lambda_{1},...,\lambda_{n}$ are positive numbers
such that $\sum_{k=1}^{n}\lambda_{k}=1$ and $A_{1},...,A_{n}$ are matrices in
$\operatorname*{Sym}_{sp\subset(-\infty,-2]}(n,\mathbb{R})\cup
\operatorname*{Sym}_{sp\subset\lbrack-2,\infty)}(n,\mathbb{R})$ such that
$\sum_{k=1}^{n}\lambda_{k}A_{k}\geq-I_{n},$ then%
\[
\sum_{k=1}^{n}\lambda_{k}\operatorname{trace}\left(  A_{k}e^{A_{k}}\right)
\geq\operatorname{trace}\left[  \left(  \sum_{k=1}^{n}\lambda_{k}A_{k}\right)
e^{\sum_{k=1}^{n}\lambda_{k}A_{k}}\right]  .
\]

\section{The Extension of Hardy-Littlewood-P\'{o}lya theorem of majorization}

The notion of point of convexity leads to a very large generalization of the
Hardy-Littlewood-P\'{o}lya theorem of majorization. Given a vector
$\mathbf{x}=(x_{1},...,x_{N})$ in $\mathbb{R}^{N},$ let $\mathbf{x}%
^{\downarrow}$ be the vector with the same entries as $\mathbf{x}$ but
rearranged in decreasing order,%
\[
x_{1}^{\downarrow}\geq\cdots\geq x_{N}^{\downarrow}.
\]
The vector $\mathbf{x}$ is \emph{majorized} by $\mathbf{y}$ (abbreviated,
$\mathbf{x}\prec\mathbf{y})$ if%
\[
\sum_{i\,=\,1}^{k}\,x_{i}^{\downarrow}\leq\sum_{i\,=\,1}^{k}\,y_{i}%
^{\downarrow}\quad\text{for }k=1,...,N-1
\]
and%
\[
\sum_{i\,=\,1}^{N}\,x_{i}^{\downarrow}=\sum_{i\,=\,1}^{N}\,y_{i}^{\downarrow
}\,.
\]

The concept of majorization admits an order-free characterization based on the
notion of doubly stochastic matrix. Recall that a matrix $A\in
\,\operatorname{M}_{n}(\mathbb{R})$ is \emph{doubly stochastic} if it has
nonnegative entries and each row and each column sums to unity.

\begin{theorem}
\label{ThmHLP}\emph{(Hardy, Littlewood and P\'{o}lya \cite{HLP}).} Let
$\mathbf{x}$ and $\mathbf{y}$ be two vectors in $\mathbb{R}^{N}$, whose
entries belong to an interval $I.$ Then the following statements are equivalent:

$a)$ $\mathbf{x}\prec\mathbf{y};$

$b)$ there is a doubly stochastic matrix $A=(a_{ij})_{1\leq i,j\leq N}$ such
that $\mathbf{x}=A\mathbf{y};$

$c)$ the inequality $\sum_{i=1}^{N}f(x_{i})\leq\sum_{i=1}^{N}f(y_{i})$ holds
for every continuous convex function $f:I\rightarrow\mathbb{R}$.
\end{theorem}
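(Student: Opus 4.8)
The plan is to prove the Hardy--Littlewood--P\'{o}lya theorem via the cyclic chain of implications $a)\Rightarrow b)\Rightarrow c)\Rightarrow a)$, which is the standard and cleanest route. The heart of the argument lies in establishing $a)\Rightarrow b)$; the remaining two implications are comparatively routine.

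\medskip

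\noindent\textbf{Step 1: $a)\Rightarrow b)$.} Assuming $\mathbf{x}\prec\mathbf{y}$, I would construct a doubly stochastic matrix $A$ with $\mathbf{x}=A\mathbf{y}$ by an explicit reduction argument. Without loss of generality reorder so that the entries of both vectors are in decreasing order (permutation matrices are doubly stochastic, and a product of doubly stochastic matrices is doubly stochastic). The idea is to transform $\mathbf{y}$ into $\mathbf{x}$ through a finite sequence of elementary ``$T$-transforms''---that is, maps that act as the identity on all but two coordinates $i<j$ and replace the pair $(y_i,y_j)$ by $\bigl(ty_i+(1-t)y_j,\;(1-t)y_i+ty_j\bigr)$ for some $t\in[0,1]$. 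Each such $T$-transform is a doubly stochastic matrix. At each stage I pick the largest index $i$ where the running vector still exceeds $\mathbf{x}$ and the smallest index $j>i$ where it falls short, and choose $t$ to close one of these two gaps completely, thereby reducing the number of coordinates at which the vectors differ. The majorization hypothesis guarantees that whenever some coordinate of the current vector lies strictly above the target, a compensating coordinate lying strictly below exists (otherwise a partial-sum inequality would be violated), so the procedure is well defined and terminates in finitely many steps. Composing all the $T$-transforms yields the desired doubly stochastic matrix $A$.

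\medskip

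\noindent\textbf{Step 2: $b)\Rightarrow c)$.} Given $\mathbf{x}=A\mathbf{y}$ with $A=(a_{ij})$ doubly stochastic, fix any continuous convex $f\colon I\to\mathbb{R}$. Since each $x_i=\sum_{j}a_{ij}y_j$ is a convex combination of the $y_j$ (the row sums to $1$ with nonnegative entries), ordinary Jensen's inequality---equivalently Lemma~\ref{lem1}, since convexity of $f$ supplies a supporting line at every point of $I$---gives $f(x_i)\le\sum_{j}a_{ij}f(y_j)$. Summing over $i$ and interchanging the order of summation,
\[
\sum_{i=1}^{N}f(x_i)\le\sum_{i=1}^{N}\sum_{j=1}^{N}a_{ij}f(y_j)=\sum_{j=1}^{N}\Bigl(\sum_{i=1}^{N}a_{ij}\Bigr)f(y_j)=\sum_{j=1}^{N}f(y_j),
\]
where the last equality uses that each column of $A$ also sums to $1$.

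\medskip

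\noindent\textbf{Step 3: $c)\Rightarrow a)$.} Applying the hypothesis of $c)$ to suitably chosen convex test functions recovers the defining inequalities of majorization. Taking $f(t)=t$ and $f(t)=-t$ forces $\sum_i x_i=\sum_i y_i$, establishing the equality of total sums. For the partial sums, I would apply $c)$ to the family of convex functions $f_c(t)=(t-c)^{+}=\max\{t-c,0\}$ for $c\in I$; a short computation shows that controlling $\sum_i (x_i-c)^+\le\sum_i (y_i-c)^+$ for all $c$ is equivalent, after choosing $c$ to sit between consecutive ordered entries of $\mathbf{y}$, to the partial-sum inequalities $\sum_{i=1}^{k}x_i^{\downarrow}\le\sum_{i=1}^{k}y_i^{\downarrow}$.

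\medskip

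I expect the main obstacle to be the bookkeeping in Step~1: verifying rigorously that each $T$-transform strictly decreases the number of disagreeing coordinates while preserving the majorization relation with respect to the final target $\mathbf{x}$, and confirming that the compensating index $j$ always exists. This termination-and-feasibility argument is where all the combinatorial content of the theorem resides, whereas Steps~2 and~3 are essentially direct consequences of convexity and a clever choice of test functions.
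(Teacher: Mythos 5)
The paper offers nothing to compare against here: Theorem \ref{ThmHLP} is quoted as a classical result and the authors simply cite \cite{HLP} for it; the only related argument in the paper is the proof of Theorem \ref{ThmGHLP}, which generalizes the implication $b)\Rightarrow c)$. Your proposal is the standard textbook proof (essentially Lemma 2.B.1 and Proposition 4.B.1 of \cite{MOA}) and its outline is correct. Your Step 2 coincides with the paper's proof of Theorem \ref{ThmGHLP}: Jensen applied row-wise, then the column sums collecting the weights. Step 1 is the classical $T$-transform reduction; your existence argument for the compensating index $j>i$ is sound (if no coordinate after $i$ fell strictly short, then since the total sums agree the partial sum of the running vector up to $i-1$ would drop strictly below that of $\mathbf{x}$, violating majorization), and the piece you still owe --- and correctly flag as the crux --- is that the transformed vector remains decreasing (or can be resorted) and still majorizes $\mathbf{x}$, so that the induction on the number of agreeing coordinates goes through; this verification is routine and standard. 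Step 3 works exactly as you say: with $c=y_k^{\downarrow}$ one gets $\sum_i (y_i-c)^+ = \sum_{i\le k} y_i^{\downarrow}-kc$ while $\sum_i (x_i-c)^+ \ge \sum_{i\le k} x_i^{\downarrow}-kc$, which yields the $k$-th partial-sum inequality. One small inaccuracy worth fixing: your parenthetical claim that convexity of $f$ supplies a supporting line \emph{at every point of} $I$ fails at an endpoint of a non-open interval (consider $f(t)=-\sqrt{t}$ on $[0,1]$ at $t=0$, where the one-sided derivative is $-\infty$); this does not damage Step 2, because the finite form of Jensen's inequality needs no supporting line --- it follows by induction directly from the definition of convexity --- but the appeal to Lemma \ref{lem1} should be replaced by that elementary argument.
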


An alternative characterization of the relation of majorization is given by
the Schur-Horn theorem: $\mathbf{x}\prec\mathbf{y}$ in $\mathbb{R}^{N}$ if and
only if the components of $\mathbf{x}$ and $\mathbf{y}$ are respectively the
diagonal elements and the eigenvalues of a self-adjoint matrix. The details
can be found in the book of Marshall, Olkin and Arnold \cite{MOA}, pp. 300-302.

The notion of majorization is generalized by weighted majorization, that
refers to probability measures rather than vectors. This is done by
identifying any vector $\mathbf{x}=(x_{1},...,x_{N})$ in $\mathbb{R}^{N}$ with
the probability measure $\frac{1}{N}\sum_{i=1}^{N}\delta_{x_{i}},$ where
$\delta_{x_{i}}$ denotes the Dirac measure concentrated at $x_{i}.$

We define the relation of majorization%
\begin{equation}
\sum_{i=1}^{m}\lambda_{i}\delta_{\mathbf{x}_{i}}\prec\sum_{j=1}^{n}\mu
_{j}\delta_{\mathbf{y}_{j}}, \tag{$2$}\label{2}%
\end{equation}
between two positive discrete measures supported at points in $\mathbb{R}^{N}%
$, by asking the existence of a $m\times n$-dimensional matrix $A=(a_{ij}%
)_{i,j}$ such that%
\begin{gather}
a_{ij}\geq0,\text{ for all }i,j\tag{$3$}\label{3}\\
\sum_{j=1}^{n}a_{ij}=1,\text{\quad}i=1,...,m\tag{$4$}\label{4}\\
\mu_{j}=\sum_{i=1}^{m}a_{ij}\lambda_{i},\text{\quad}j=1,...,n \tag{$%
5$}\label{5}%
\end{gather}
and%
\begin{equation}
\mathbf{x}_{i}=\sum_{j=1}^{n}a_{ij}\mathbf{y}_{j}\text{,\quad}i=1,...,m.
\tag{$6$}\label{6}%
\end{equation}

The matrices verifying the conditions $(3)\&(4)$ are called \emph{stochastic
on rows}. When $m=n$ and all weights $\lambda_{i}$ and $\mu_{j}$ are equal to
each others, the condition $(5)$ assures the \emph{stochasticity on columns,
}so in that case we deal with doubly stochastic matrices.

We are now in a position to state the following generalization of the
Hardy-Littlewood-P\'{o}lya theorem of majorization:

\begin{theorem}
\label{ThmGHLP}Suppose that $f$ is a real-valued function defined on a compact
convex subset $K$ of $\mathbb{R}^{N}$ and $\sum_{i=1}^{m}\lambda_{i}%
\delta_{\mathbf{x}_{i}}$ and $\sum_{j=1}^{n}\mu_{j}\delta_{\mathbf{y}_{j}}$
are two positive discrete measures concentrated at points in $K.$ If
$\mathbf{x}_{1},...,\mathbf{x}_{m}$ are points of convexity of $f$ relative to
$K$ and%
\[
\sum_{i=1}^{m}\lambda_{i}\delta_{\mathbf{x}_{i}}\prec\sum_{j=1}^{n}\mu
_{j}\delta_{\mathbf{y}_{j}},
\]
then
\begin{equation}
\sum_{i=1}^{m}\lambda_{i}f(\mathbf{x}_{i})\leq\sum_{j=1}^{n}\mu_{j}%
f(\mathbf{y}_{j}). \tag{$7$}\label{7}%
\end{equation}

\end{theorem}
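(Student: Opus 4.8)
The plan is to unpack the relation of majorization into the matrix that certifies it and then apply the defining property of points of convexity row by row. By hypothesis $\sum_{i=1}^{m}\lambda_{i}\delta_{\mathbf{x}_{i}}\prec\sum_{j=1}^{n}\mu_{j}\delta_{\mathbf{y}_{j}}$, so there exists an $m\times n$ matrix $A=(a_{ij})$ satisfying conditions $(3)$--$(6)$. The crucial observation is that condition $(6)$, namely $\mathbf{x}_{i}=\sum_{j=1}^{n}a_{ij}\mathbf{y}_{j}$, together with the row-stochasticity $(3)\,\&\,(4)$, says precisely that each $\mathbf{x}_{i}$ is a convex combination of the points $\mathbf{y}_{1},\ldots,\mathbf{y}_{n}$ (all lying in $K$) with nonnegative weights $a_{i1},\ldots,a_{in}$ summing to unity. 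This is exactly the configuration appearing in Definition \ref{def1}.

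Since each $\mathbf{x}_{i}$ is assumed to be a point of convexity of $f$ relative to $K$, inequality $(J)$ applies with $a=\mathbf{x}_{i}$ and weights $a_{ij}$, yielding the pointwise estimate
\[
f(\mathbf{x}_{i})\leq\sum_{j=1}^{n}a_{ij}f(\mathbf{y}_{j}),\qquad i=1,\ldots,m.
\]
Multiplying the $i$th inequality by $\lambda_{i}>0$, summing over $i$, and interchanging the order of summation, I would obtain
\[
\sum_{i=1}^{m}\lambda_{i}f(\mathbf{x}_{i})\leq\sum_{i=1}^{m}\sum_{j=1}^{n}\lambda_{i}a_{ij}f(\mathbf{y}_{j})=\sum_{j=1}^{n}\left(\sum_{i=1}^{m}a_{ij}\lambda_{i}\right)f(\mathbf{y}_{j}).
\]
Condition $(5)$ identifies the inner sum as $\mu_{j}$, so the right-hand side collapses to $\sum_{j=1}^{n}\mu_{j}f(\mathbf{y}_{j})$, which is exactly $(7)$.

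The only delicate point is the insistence in Definition \ref{def1} that the weights be strictly positive, whereas some entries $a_{ij}$ of the certifying matrix may vanish. I would deal with this by discarding, for each fixed $i$, those indices $j$ with $a_{ij}=0$; the remaining weights are positive, still sum to $1$ by $(4)$, and still reconstruct $\mathbf{x}_{i}$ by $(6)$, so $(J)$ applies verbatim and the discarded terms contribute nothing. Beyond this bookkeeping the argument is essentially forced: the content of the theorem lies entirely in having isolated the notion of point of convexity, which is engineered to make precisely this row-by-row application of Jensen's inequality legitimate even though $f$ need not be convex. No compactness, continuity, or supporting-hyperplane input is needed here --- those enter only earlier, in guaranteeing the \emph{existence} of points of convexity.
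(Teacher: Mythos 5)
Your proof is correct and follows essentially the same route as the paper's: extract the row-stochastic matrix $A$ certifying the majorization, apply the defining inequality $(J)$ of a point of convexity to each $\mathbf{x}_{i}=\sum_{j}a_{ij}\mathbf{y}_{j}$, then multiply by $\lambda_{i}$, sum, interchange the order of summation, and invoke condition $(5)$. Your explicit handling of the vanishing entries $a_{ij}$ (discarding them so that the weights are strictly positive, as Definition \ref{def1} demands) is a small point the paper's proof passes over in silence, but it changes nothing structurally.
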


\begin{proof}
By our hypothesis, there exists a $m\times n$-dimensional matrix
$A=(a_{ij})_{i,j}$ that is stochastic on rows and verifies the conditions
$(5)$ and $(6)$. The last condition shows that each point $\mathbf{x}_{i}$ is
the barycenter of the probability measure $\sum_{j=1}^{n}a_{ij}\delta
_{\mathbf{y}_{j}}$. By Jensen's inequality, we infer that
\[
f(\mathbf{x}_{i})\leq\sum_{j=1}^{n}a_{ij}f(\mathbf{y}_{j}).
\]
Multiplying each side by $\lambda_{i}$ and then summing up over $i$ from $1$
to $m,$ we conclude that
\[
\sum_{i=1}^{m}\lambda_{i}f(\mathbf{x}_{i})\leq\sum_{i=1}^{m}\left(
\lambda_{i}\sum_{j=1}^{n}a_{ij}f(\mathbf{y}_{j})\right)  =\sum_{j=1}%
^{n}\left(  \sum_{i=1}^{m}a_{ij}\lambda_{i}\right)  f(\mathbf{y}_{j}%
)=\sum_{j=1}^{n}\mu_{j}f(\mathbf{y}_{j}),
\]
and the proof of $(7)$ is done.
\end{proof}

\begin{example}
The well known Gauss--Lucas theorem on the distribution of the critical points
of a polynomial asserts that the roots $(\mu_{k})_{k=1}^{n-1}$ of the
derivative $P^{\prime}$ of any complex polynomial $P\in$ $\mathbb{C}[z]$ of
degree $n\geq2$ lie in the smallest convex polygon containing the roots
$(\lambda_{j})_{j=1}^{n}$ of the polynomial $P$. This led Malamud \cite{M2005}
to the interesting remark that the two families of roots are actually related
by the relation of majorization. Based on this remark, he was able to prove
the following conjecture raised by de Bruijn and Springer in 1947: for any
convex function $f:\mathbb{C}\rightarrow\mathbb{R}$ and any polynomial $P$ of
degree $n\geq2,$
\[
\frac{1}{n-1}\sum_{k=1}^{n-1}f(\mu_{k})\leq\frac{1}{n}\sum_{j=1}^{n}%
f(\lambda_{j}),
\]
where $(\lambda_{j})_{j=1}^{n}$ and $(\mu_{k})_{k=1}^{n-1}$ are respectively
the roots of $P$ and $P^{\prime}.$

Theorem \ref{ThmGHLP} allows us to relax the condition of convexity by asking
only that all the roots $\mu_{k}$ of $P^{\prime}$ be points of convexity for
$f.$ According to a remark above concerning the function $e^{-\left\vert
z\right\vert ^{2}},$ this implies that
\[
\frac{1}{n-1}\sum_{k=1}^{n-1}e^{-|\mu_{k}|^{2}}\geq\frac{1}{n}\sum_{j=1}%
^{n}e^{-\left\vert \lambda_{j}\right\vert ^{2}},
\]
whenever the roots $\mu_{1},...,\mu_{n-1}$ belong to $\overline{D}%
_{1/2}\left(  0\right)  $ and $\lambda_{1},...,\lambda_{n}$ belong to
$\overline{D}_{1.18}\left(  0\right)  .$ An example of polynomial verifying
these conditions is $P(z)=4z^{3}-3z.$
\end{example}

\begin{example}
A second application of Theorem \ref{ThmGHLP} refers to the function
$f(x)=\log^{2}x.$ This function is convex on the interval $(0,e]$ and concave
on $[e,\infty)$. The Hardy-Littlewood-P\'{o}lya theorem of majorization easily
yields the implication
\begin{equation}
\left(  x_{1},...,x_{n}\right)  \prec\left(  y_{1},...,y_{n}\right)
\Rightarrow\sum_{i=1}^{n}\log^{2}x_{i}\leq\sum_{i=1}^{n}\log^{2}y_{i}
\tag{$8$}\label{8}%
\end{equation}
whenever $x_{1},...,x_{n}$ and $y_{1},...,y_{n}$ belong to $(0,e].$ According
to Lemma 1, all points in $(0,2]$ are points of convexity of $f$ relative to
$(0,a^{\ast}],$ where%
\[
a^{\ast}=5.495\,869\,874...
\]
is the solution of the equation $\log^{2}x-\log^{2}2=\left(  \log2\right)
(x-2).$ By Theorem \ref{ThmGHLP}, the implication $(8)$ still works when
$x_{1},...,x_{n}\in(0,2]$ and $y_{1},...,y_{n}\in(0,a^{\ast}].$ Recently,
B\^{\i}rsan, Neff and Lankeit \cite{BNL} noticed still another case where an
inequality of the form $(8)$ holds true. Precisely, they proved that for every
two triplets $x_{1},x_{2},x_{3}$ and $y_{1},y_{2},y_{3}$ of positive numbers
which verify the conditions%
\[
x_{1}+x_{2}+x_{3}\leq y_{1}+y_{2}+y_{3},\text{\quad}x_{1}x_{2}+x_{2}%
x_{3}+x_{3}x_{1}\leq y_{1}y_{2}+y_{2}y_{3}+y_{3}y_{1}\text{ }%
\]
and $x_{1}x_{2}x_{3}=y_{1}y_{2}y_{3},$ we have%
\[
\sum_{i=1}^{3}\log^{2}x_{i}\leq\sum_{i=1}^{3}\log^{2}y_{i}.
\]

This suggests a new concept of majorization for $n$-tuples of positive
elements, based on elementary symmetric functions. Being beyond the purpose of
this paper, we will not enter the details.
\end{example}

Theorem \ref{ThmGHLP} provides the following extension of Popoviciu's inequality:

\begin{theorem}
\label{TPOP} Suppose that $f$ is a real-valued function defined on an interval
$I$. If $a,b,c$ belong to $I$ and $\frac{a+b}{2},\frac{a+c}{2}$ and
$\frac{b+c}{2}$ are points of convexity of $f$ relative to the entire interval
$I,$ then%
\begin{multline}
\frac{f\left(  a\right)  +f\left(  b\right)  +f\left(  c\right)  }{3}+f\left(
\frac{a+b+c}{3}\right) \tag{$9$}\label{9}\\
\geq\frac{2}{3}\left[  f\left(  \frac{a+b}{2}\right)  +f\left(  \frac{a+c}%
{2}\right)  +f\left(  \frac{b+c}{2}\right)  \right]  .\nonumber
\end{multline}

\end{theorem}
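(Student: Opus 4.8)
The plan is to realize the Popoviciu-type inequality~\eqref{9} as a single instance of the generalized majorization Theorem~\ref{ThmGHLP}. Popoviciu's inequality is classically an equality-of-multisets majorization statement: the left-hand side carries mass on the three points $a,b,c$ together with the centroid $\frac{a+b+c}{3}$, while the right-hand side carries mass on the three midpoints $\frac{a+b}{2},\frac{a+c}{2},\frac{b+c}{2}$. So the natural move is to write~\eqref{9} in the form $\sum_i\lambda_i f(\mathbf{x}_i)\le\sum_j\mu_j f(\mathbf{y}_j)$, where the measure $\sum_j\mu_j\delta_{\mathbf{y}_j}$ sits on $\{a,b,c\}$ and the centroid, and $\sum_i\lambda_i\delta_{\mathbf{x}_i}$ sits on the three midpoints. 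I must be careful about the direction: in~\eqref{9} it is the \emph{midpoints} that are asserted to be points of convexity, and they appear on the smaller (right) side after multiplying through. So I would set the majorized measure (the $\mathbf{x}_i$, the points of convexity) to be supported on the three midpoints, and the majorizing measure (the $\mathbf{y}_j$) to be supported on $a,b,c$ and their centroid.

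First I would fix the weights. Multiplying~\eqref{9} by $3$ and rearranging, the inequality to prove is
\begin{equation*}
2f\!\left(\tfrac{a+b}{2}\right)+2f\!\left(\tfrac{a+c}{2}\right)+2f\!\left(\tfrac{b+c}{2}\right)\le f(a)+f(b)+f(c)+3f\!\left(\tfrac{a+b+c}{3}\right),
\end{equation*}
and both sides now have total weight $6$. Normalizing by $6$, the target measures are
\begin{equation*}
\nu_{\mathrm{left}}=\tfrac{1}{3}\delta_{(a+b)/2}+\tfrac{1}{3}\delta_{(a+c)/2}+\tfrac{1}{3}\delta_{(b+c)/2},\qquad
\nu_{\mathrm{right}}=\tfrac{1}{6}\delta_a+\tfrac{1}{6}\delta_b+\tfrac{1}{6}\delta_c+\tfrac{1}{2}\delta_{(a+b+c)/3},
\end{equation*}
so $m=3$, $n=4$, with $\lambda_i=\tfrac13$ and $(\mu_1,\mu_2,\mu_3,\mu_4)=(\tfrac16,\tfrac16,\tfrac16,\tfrac12)$. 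The crux is to produce an explicit $3\times4$ matrix $A=(a_{ij})$ that is stochastic on rows (conditions~(3),~(4)), satisfies the column-marginal condition~(5) recovering $\nu_{\mathrm{right}}$ from $\nu_{\mathrm{left}}$, and realizes each midpoint $\mathbf{x}_i$ as the $A$-barycenter~(6) of the four points $a,b,c,\frac{a+b+c}{3}$. Then Theorem~\ref{ThmGHLP} applies verbatim, since by hypothesis the three midpoints are points of convexity of $f$ relative to $I$.

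The main obstacle is guessing the correct entries of $A$ and verifying~(6) is an affine identity. Each midpoint must be written as a convex combination of $\{a,b,c,\frac{a+b+c}{3}\}$; the symmetric choice is to take, for the row corresponding to $\frac{a+b}{2}$, some weight $\alpha$ on each of $a$ and $b$, weight $\beta$ on $c$, and weight $\gamma$ on the centroid, with $2\alpha+\beta+\gamma=1$ and the barycenter condition forcing the coefficients of $a,b,c$ to match. Writing $\tfrac{a+b}{2}$ in the basis and using that $\frac{a+b+c}{3}=\frac13 a+\frac13 b+\frac13 c$, one gets linear constraints; by symmetry the other two rows are obtained by permuting the roles of $a,b,c$. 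I expect a one-parameter family of solutions and would pick the value of the free parameter that makes the column sums~(5) reproduce $(\tfrac16,\tfrac16,\tfrac16,\tfrac12)$ after weighting the rows by $\lambda_i=\tfrac13$. The remaining work—checking $a_{ij}\ge0$ for the chosen parameter—is routine once the explicit matrix is in hand.

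**Alternative route.** If constructing $A$ by hand proves awkward, I would instead verify the measure-majorization~\eqref{2} abstractly: it suffices to check that $\nu_{\mathrm{left}}$ and $\nu_{\mathrm{right}}$ have equal total mass and equal barycenter (both centroids equal $\frac{a+b+c}{3}$), and that $\nu_{\mathrm{right}}$ majorizes $\nu_{\mathrm{left}}$ in the Hardy--Littlewood--P\'olya sense on $I$; the existence of the row-stochastic matrix $A$ then follows from the Birkhoff-type characterization underlying conditions~(3)--(6). In one real variable this reduces to comparing ordered partial sums, which is the original content of Popoviciu's inequality and is elementary to confirm. Either way, the inequality~\eqref{9} drops out of Theorem~\ref{ThmGHLP} once the midpoints are known to be points of convexity, which is exactly what the hypothesis supplies.
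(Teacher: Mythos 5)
Your overall strategy --- feeding the three midpoints, as the points of convexity $\mathbf{x}_i$, together with the four points $a,b,c,\frac{a+b+c}{3}$ as the $\mathbf{y}_j$, into Theorem \ref{ThmGHLP} with weights $\lambda_i=\frac13$ and $\mu=(\frac16,\frac16,\frac16,\frac12)$ --- is exactly the paper's strategy, and your normalization and choice of direction are correct. The gap is in your primary route for producing the matrix $A$. The symmetric ansatz with ``coefficients of $a,b,c$ matching'' has a \emph{unique} solution, not a one-parameter family: for the row of $\frac{a+b}{2}$ the matching conditions read $\alpha+\gamma/3=\frac12$, $\alpha+\gamma/3=\frac12$, $\beta+\gamma/3=0$, and nonnegativity forces $\beta=\gamma=0$, $\alpha=\frac12$. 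Hence every row puts zero mass on the centroid column, the weighted column sums come out $(\frac13,\frac13,\frac13,0)$ instead of $(\frac16,\frac16,\frac16,\frac12)$, and condition \eqref{5} fails. Worse, this is not fixable by a cleverer symmetric guess: any matrix whose barycenter identities \eqref{6} hold \emph{identically} in $a,b,c$ must have a zero fourth column by the same computation, so a valid $A$ can only exist by exploiting the order relations among the particular numbers $a,b,c$ --- which is precisely why the paper's proof is forced into a case distinction according to the position of $\frac{a+b+c}{3}$ relative to the middle value.

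Your fallback route is the right idea and is, in substance, the paper's proof, but as written it leans on two unproved steps. First, the ``Birkhoff-type characterization'' you invoke (equal mass, equal barycenter and HLP partial-sum domination imply the existence of the row-stochastic $A$ for measures with \emph{unequal} weights) is nowhere in the paper; the paper identifies measure majorization with a matrix condition of classical type only when $m=n$ and all weights are equal. The paper therefore first splits each atom into copies of weight $\frac16$, comparing $\mathbf{x}=\bigl(\frac{a+b}{2},\frac{a+b}{2},\frac{a+c}{2},\frac{a+c}{2},\frac{b+c}{2},\frac{b+c}{2}\bigr)$ with $\mathbf{y}$ consisting of $a,b,c$ and three copies of $\frac{a+b+c}{3}$, reducing everything to classical majorization in $\mathbb{R}^6$, where Theorem \ref{ThmHLP} a)$\Rightarrow$b) supplies the needed (doubly stochastic) matrix. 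Second, the partial-sum verification you call ``elementary to confirm'' is the actual content of the proof and does require the two cases $a\geq\frac{a+b+c}{3}\geq b\geq c$ and $a\geq b\geq\frac{a+b+c}{3}\geq c$ (both checks reduce to comparing $2b$ with $a+c$). So: same endpoint as the paper, but the executable path is your fallback, and it needs the atom-splitting step plus the explicit two-case partial-sum check to be complete.
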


\begin{proof}
Without loss of generality we may assume that $a\geq b\geq c$. Then
\[
\frac{a+b}{2}\geq\frac{a+c}{2}\geq\frac{b+c}{2}\text{ and }a\geq\frac
{a+b+c}{3}\geq c.
\]

We attach to the points $a,b,c$ two sextic families of points:
\begin{align*}
x_{1}  &  =x_{2}=\frac{a+b}{2},\;x_{3}=x_{4}=\frac{a+c}{2},\;x_{5}=x_{6}%
=\frac{b+c}{2}\\
y_{1}  &  =a,\;y_{2}=y_{3}=y_{4}=\frac{a+b+c}{3},\;y_{5}=b,\;y_{6}=c
\end{align*}
if $a\geq(a+b+c)/3\geq b\geq c,$ and
\begin{align*}
x_{1}  &  =x_{2}=\frac{a+b}{2},\;x_{3}=x_{4}=\frac{a+c}{2},\;x_{5}=x_{6}%
=\frac{b+c}{2}\\
y_{1}  &  =a,\;y_{2}=b,\;y_{3}=y_{4}=y_{5}=\frac{a+b+c}{3},\;y_{6}=c
\end{align*}
if $a\geq b\geq(a+b+c)/3\geq c.$ In both cases $\frac{1}{6}\sum_{i=1}%
^{6}\delta_{x_{i}}\prec\frac{1}{6}\sum_{i=1}^{6}\delta_{y_{i}},$ and thus the
inequality $(9)$ follows from Theorem \ref{ThmGHLP}.
\end{proof}

Popoviciu noticed that under the presence of continuity, the inequality $(9)$
works for all triplets $a,b,c\in I$ if and only if the function $f$ is convex.
See \cite{NP2006}, p. 12. Theorem \ref{TPOP} allows this inequality to work
for \emph{certain} triplets $a,b,c$ even when $f$ is not convex. For example,
this is the case of the function $\log^{2}x$, and all points $a,b,c\in
(0,a^{\ast}]$ such that $\frac{a+b}{2},\frac{a+c}{2},\frac{b+c}{2}\in(0,2].$

\begin{remark}
The theory of points of convexity and our generalization of the
Hardy-Littlewood-P\'{o}lya theorem stated in Theorem \ref{ThmGHLP} extend
verbatim to the context of spaces with global nonpositive curvature. See
\cite{NRov2014} for the theory of convex functions on such spaces.
\end{remark}

\section{An Application to Mathematical Finance}

In the context of probability theory, Jensen's inequality is generally stated
in the following form: if $X$ is a random variable and $f$ is a continuous
convex function on an open interval containing the range of $X$, then%
\[
f(E(X))\leq E(f(X)),
\]
provided that both of expectations $E(X)$ and $E(f(X))$ exist and are finite.

A nice illustration of this inequality in mathematical finance refers to the
so called \emph{risk aversion}, the reluctance of someone who wants to invest
his life savings into a stock that may have high expected returns (but also
involves a chance of losing value), preferring to put his or her money into a
bank account with a low but guaranteed interest rate. Indeed, if the utility
function $f$ is concave, then%
\[
f(E(X))\geq E(f(X)).
\]

Using the technique of pushing-forward measures (i.e., of image measures), we
will show that this inequality still works when $f$ is continuous and $E(X)$
is a point of concavity of $f$ relative to its whole domain. This follows from
the following technical result.

\begin{theorem}
\label{thmJintegral}Suppose that $f$ is a real-valued continuous function
defined on an open interval $I$ and $X$ is a random variable associated to a
probability space $\,(\Omega,\Sigma,\mu)$ such that

$(i)$ the range of $X$ is included in the interval $I;$

$(ii)$ the expectations $E(X)$ and $E(f(X))$ exist and are finite;

$(iii)$ $E(X)$ is a point of convexity of $f$ relative to $I.$

Then
\[
f(E(X))\leq\int_{\Omega}f(X(\omega))d\mu(\omega)
\]

\end{theorem}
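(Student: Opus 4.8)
The plan is to reduce the integral Jensen inequality (Theorem \ref{thmJintegral}) to the finite, discrete form of the inequality $(J)$ built into the definition of a point of convexity (Definition \ref{def1}), and to bridge the gap by an approximation argument using image measures. Let $\nu = X_{\#}\mu$ be the push-forward of $\mu$ under $X$, a Borel probability measure on the interval $I$. By the change-of-variables formula for image measures, the two hypotheses $(ii)$ become statements about $\nu$, namely $E(X) = \int_{I} t\, d\nu(t)$ and $\int_{\Omega} f(X)\, d\mu = \int_{I} f(t)\, d\nu(t)$, so the inequality to be proved is exactly $f(b_\nu) \leq \int_{I} f\, d\nu$, where $b_\nu = E(X)$ is the barycenter of $\nu$. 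Thus the probabilistic dressing is entirely cosmetic, and the real task is a barycentric Jensen inequality for the single measure $\nu$ under the sole assumption that its barycenter is a point of convexity of $f$ relative to $I$.

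First I would dispose of the compactly-supported case. If $\nu$ is supported on a compact subinterval $[\alpha,\beta] \subseteq I$, I would approximate $\nu$ weakly by finitely-supported probability measures $\nu_N = \sum_k \lambda_k^{(N)} \delta_{t_k^{(N)}}$ chosen so that their barycenters coincide with $b_\nu$ for every $N$ — this is the key constraint, since $(J)$ applies only to combinations whose weighted mean is exactly the point $a = E(X)$. One clean way to force equal barycenters is to partition $[\alpha,\beta]$ into small subintervals, place the atoms at the conditional barycenters of $\nu$ on each piece with the corresponding masses, so that $b_{\nu_N} = b_\nu$ holds identically. For each such $\nu_N$, the defining property of the point of convexity gives $f(b_\nu) \leq \int f\, d\nu_N$. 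Since $f$ is continuous on the compact set $[\alpha,\beta]$ and the $\nu_N$ converge weakly to $\nu$, letting $N \to \infty$ yields $f(b_\nu) \leq \int f\, d\nu$, as desired.

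The main obstacle is the general, non-compactly-supported case, where $I$ may be open and unbounded and $\nu$ may have unbounded support; here the passage to the limit is delicate because $f$ need not be bounded and weak convergence does not control the tails. I would handle this by truncation: restrict $\nu$ to $[\alpha_n,\beta_n]$ with $\alpha_n \downarrow \inf I$, $\beta_n \uparrow \sup I$, renormalize, and correct the barycenter. The finiteness assumptions in $(ii)$, i.e. integrability of both $\mathrm{id}$ and $f$ against $\nu$, are exactly what make the tail contributions negligible, so that the truncated barycenters converge to $b_\nu$ and the truncated integrals of $f$ converge to $\int f\, d\nu$ by dominated convergence. The technical friction is keeping the barycenter of each truncated-and-renormalized measure pinned so that inequality $(J)$ — or its barycentric limit from the compact case — remains applicable; I expect to absorb the small barycentric discrepancy by adding a vanishing point-mass and invoking continuity of $f$ at the limit. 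Once the barycenters and the $f$-integrals both converge, the inequality for $\nu$ follows by taking the limit of the compactly-supported inequalities already established.
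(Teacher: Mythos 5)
Your proposal is correct and follows the same overall route as the paper's proof: push the law of $X$ forward to a Borel probability measure $\nu$ on $I$, settle the compactly supported case by approximating $\nu$ with discrete measures having the \emph{same} barycenter (so that inequality $(J)$ from Definition \ref{def1} applies), and then handle the general case by truncation. Two differences are worth recording. First, where you construct the discrete approximations explicitly (atoms carrying the masses $\nu(I_k)$ placed at the conditional barycenters of $\nu$ over a fine partition, which pins $b_{\nu_N}=b_{\nu}$ exactly and gives $\int f\,d\nu_N\to\int f\,d\nu$ by uniform continuity of $f$ on the compact support), the paper instead cites a ready-made result (\cite{NP2006}, Lemma 4.1.10): every Borel probability measure on a compact convex set is the pointwise limit of a net of discrete measures with the same barycenter. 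Your construction is more elementary and self-contained, and in one variable it is all that is needed. Second, and more substantively, the paper's treatment of the unbounded case is a single sentence: approximate $X$ by $X_n=\sup\left\{\inf\left\{X,n\right\},-n\right\}$. This glosses over exactly the difficulty you single out: truncation shifts the mean, so $E(X_n)\neq E(X)$ in general, while hypothesis $(iii)$ is assumed only at the point $E(X)$; the compact case therefore cannot be invoked verbatim for $X_n$. Your repair --- recentering each truncated, renormalized measure by mixing in a vanishing point mass $\epsilon_n\delta_{c_n}$, with $c_n$ chosen so the barycenter returns exactly to $b_{\nu}$ (choosing, say, $\epsilon_n\sim|b_{\nu}-b_n|^{1/2}$ keeps $c_n\to b_{\nu}$, hence $c_n\in I$ eventually), then letting $n\to\infty$ using continuity of $f$ near $b_{\nu}$ and dominated convergence, which is where hypothesis $(ii)$ enters --- is precisely the detail the paper omits, and it does close the argument. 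So your proposal is not merely aligned with the paper's proof; it is more complete at the paper's weakest point.
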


\begin{proof}
Since $X:\Omega\rightarrow I$ is a $\mu$-integrable map, the push-forward
measure $\nu,$ given by the formula $\nu(A)=\mu(X^{-1}(A)),$ is a Borel
probability measure on $I$ with barycenter $b_{\nu}=\int_{\Omega}X(\omega
)d\mu(\omega)=E(X).$ We have to prove that%
\[
f(b_{\nu})\leq\int_{I}f(x)d\nu(x).
\]
When $\nu$ is a discrete measure, this follows from the fact that $b_{\nu}$ is
a point of convexity. If the range of $X$ is included in a compact subinterval
$K$ of $I$, then the support of $\nu$ is included in $K$ and we have to use
the following approximation argument proved in \cite{NP2006}, Lemma 4.1.10, p.
183: every Borel probability measure $\nu$\ on a compact convex set $K$\ is
the pointwise limit of a net of discrete Borel probability measures
$\nu_{\alpha}$\ on $K$, each having the same barycenter as $\nu.$

In the general case, we approximate $X$ by the sequence of bounded random
variables $X_{n}=\sup\left\{  \inf\left\{  X,n\right\}  ,-n\right\}  .$
\end{proof}

\section{Concluding Remarks}

In this paper we introduced the concept of convexity at a point relative to a
convex subset of the domain. This fact made Jensen's inequality available to a
large variety of nonconvex functions and shed new light on the
Hardy-Littlewood-P\'{o}lya theorem of majorization. In turn, the probabilistic
form of Jensen's inequality (as stated in Theorem \ref{thmJintegral}) put in a
more general perspective the problem of risk aversion.

Most likely the notion of convexity at a point could have a practical purpose
in optimization theory, information theory, the design of communication
systems etc.

\medskip

\noindent\textbf{Acknowledgement}. The first author was supported by a grant
of the Romanian National Authority for Scientific Research, CNCS -- UEFISCDI,
project number PN-II-ID-PCE-2011-3-0257. The second author was supported by
the strategic grant POSDRU/159/1.5/S/133255, Project ID 133255 (2014),
co-financed by the European Social Fund within the Sectorial Operational
Program Human Resources Development 2007 - 2013.

\end{document}